\documentclass[a4paper, 11pt, reqno]{amsart}
\usepackage[english]{babel}
\usepackage[utf8]{inputenc}
\usepackage[T1]{fontenc}
\usepackage{amsthm, amsmath, amssymb, enumerate, enumitem, mathtools, lmodern, microtype, mathrsfs, listings, tikz-cd, comment, float}
\usepackage{a4wide}
\usepackage{hyperref}
\usepackage{nicefrac}

\usepackage{bbm}

\allowdisplaybreaks 
\let\epsilon\varepsilon

\theoremstyle{plain}
\newtheorem{thm}{Theorem}[section]
\newtheorem{prop}[thm]{Proposition}
\newtheorem{lemma}[thm]{Lemma}

\newtheorem{conj}[thm]{Conjecture}
\theoremstyle{definition}

\theoremstyle{remark}
\newtheorem*{rmk}{Remark}
\newtheorem*{rmks}{Remarks}

\binoppenalty=\maxdimen
\relpenalty=\maxdimen

\numberwithin{equation}{section}
\setlist{nosep}
\setlist{noitemsep}

\newcommand{\C}{\mathbb{C}}

\newcommand{\N}{\mathbb{N}}

\DeclareMathOperator{\lcm}{lcm}

\newcommand{\up}{\mathrm{up}}

\makeatletter
\def\paragraph{\@startsection{paragraph}{4}%
  \z@\z@{-\fontdimen2\font}%
  {\normalfont\bfseries}}
\makeatother

\usepackage{enumitem}
\setlist[enumerate]{label=\rm\roman*),itemindent=0pt,leftmargin=1cm,itemsep=0.5ex}

\title[D'Arcais is not Hurwitz]{A Positive Proportion of the Reduced \\ D'Arcais Polynomials is not Hurwitz}

\author[Charlton]{Steven Charlton}
\address{}
\email{mail@stevencharlton.net}

\author[Heim]{Bernhard Heim}
\address{Department of Mathematics and Computer Science, Division of Mathematics, University of Cologne,
	Weyertal 86-90, 50931 Cologne, Germany}
\email{bheim@uni-koeln.de}
\email{jstumpen@math.uni-koeln.de}

\author[Neuhauser]{Markus Neuhauser}
\address{Kutaisi International University, 5/7, Youth Avenue, Kutaisi, 4600 Georgia \& \newline\indent
Lehrstuhl für Geometrie und Analysis, RWTH Aachen University, 52056 Aachen,
Germany}
\email{markus.neuhauser@kiu.edu.ge}

\author[Stumpenhusen]{\\ Johann Stumpenhusen}

\author[Tröger]{Robert Tröger}

\dedicatory{In memoriam Florian Luca}

\date{August 19, 2026}

\begin{document}

\begin{abstract}
    Heretofore, the second and third author conjectured that the D'Arcais polynomials, related to the coefficients of the powers of the Dedekind $\eta$-function, are Hurwitz polynomials except for a root at the origin. We show that this does in fact not hold for a positive proportion of all natural numbers. 
\end{abstract}

\maketitle

\section{Introduction and statement of results}

\subsection{Coefficients of powers of the Dedekind \texorpdfstring{$\eta$}{eta}-function} The \emph{Dedekind $\eta$-function}
\begin{equation*}
    \eta(\tau) \coloneqq q^{\frac{1}{24}} \prod_{n = 1}^\infty (1 - q^n), \qquad q \coloneqq e^{2\pi i \tau},    
\end{equation*}
represents the standard example of a weight $\frac{1}{2}$ modular form whose Fourier coefficients are tightly connected to the partition function $p(n)$. Since its 24\textsuperscript{th} power, the \emph{modular discriminant} $\Delta = \sum_{n = 1}^\infty \tau(n)q^n$, generates the space of weight $12$ cusp forms, the study of the coefficients of its powers and especially their vanishing behaviour has been enjoying widespread attention. In fact, Serre \cite[Théorème 1]{Serre} showed that $\eta(q)^r$ is lacunary for even $r$ if and only if $r \in \{2,4,6,8,10,14,26\}$ and Lehmer \cite{Leh47} famously conjectured that $\tau(n) \neq 0$ for all $n$.

\subsection{D'Arcais polynomials} Fortunately, these coefficients possess a simple but structure-heavy description. The \emph{D'Arcais polynomials}  \cite{DAr13} (or Nekrasov--Okounkov polynomials \cite{Ha10,NO06,Zh22} in combinatorics) are defined via the infinite product
\begin{equation}\label{eqn:darcais:exp}
	\sum_{n=0}^\infty P_n^\sigma(X) q^n \coloneqq \prod_{m=1}^\infty (1 - q^m)^{-X} = \exp\Bigg( X \sum_{j=1}^\infty \sigma(j) \frac{q^j}{j} \Bigg) \,,
\end{equation}
where \( \sigma(n) = \sigma_1(n) \), with \( \sigma_a(n) \coloneqq \sum_{d \mid n} d^a \) the generalised sum-of-divisors function. 
While Lehmer's conjecture remains unsolved, the focus shifted to other analytic properties as log-concavity or unimodality, as done by Abdesselam \cite{Abdessel23}, Abdesselam et al. \cite{AbPruDoVe}, Starr \cite{Starr}, and some of the authors \cite{CharltonHeimStum,HeimNeu19Hooklength,HeimNeu20,HeimNeu25,HeimStum26,stumpenhusen2026logconcavitydarcaispolynomialsnormalised}. Let
\[
f(X)=\sum_{m=0}^{d} a_mX^m,
\qquad a_m>0,
\]
be a polynomial of degree $d$ with positive real coefficients. The polynomial $f$ is called a
\emph{Hurwitz polynomial} if all its zeros lie in the open left complex half-plane. This kind of polynomials appears as characteristic polynomial in stability theory \cite[Chapter I]{Routh}. The second and third author conjectured the following concerning $R_n^\sigma(X) := \frac{P_n^\sigma(X)}{X}$.

\begin{conj}[\protect{\cite[Conjecture 2 (ii)]{HeimNeu19Hooklength}}]\label{conj:HeimNeuhauserHurwitz}
    For all $n \in \N$, the polynomial $R_n^\sigma(X)$ is Hurwitz.
\end{conj}

This conjecture was motivated by the first $1\, 000$ D'Arcais polynomials being Hurwitz, see Figure \ref{fig:ComplexRoots} for the non-real roots in those cases.  Since the coefficients of \( R_n^\sigma(X) \) are strictly positive, any real root must be on the negative real axis; we eliminate these from the following figure for the sake of clarity.

\begin{figure}[H]
    \centering
    \includegraphics[width=0.9\textwidth]{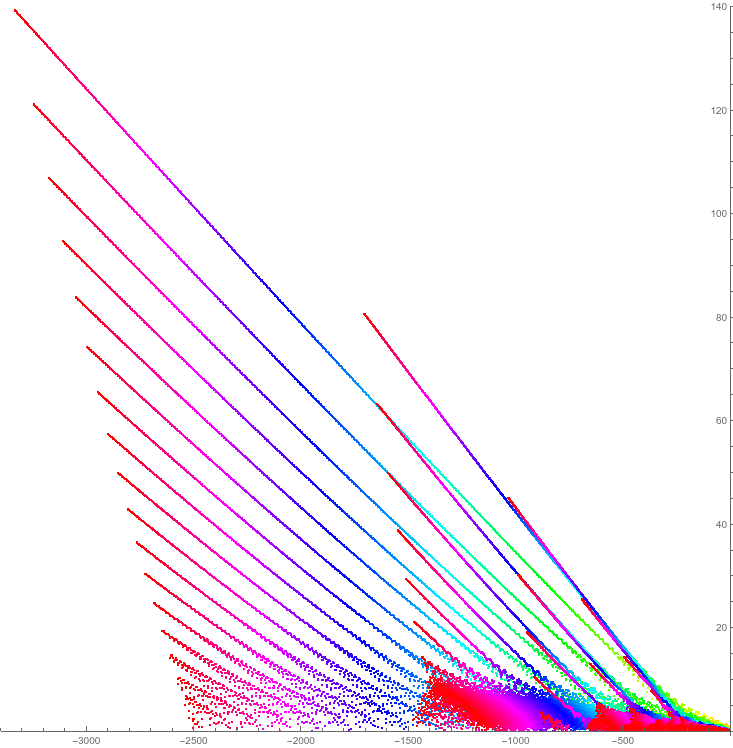}
    \caption{Distribution of non-real roots in the upper half-plane for $P_n^\sigma(X)$ with $n \leq 1 \, 000 \,$. The colour of the points runs from orange for $n = 1$, through yellow, green, cyan, blue, and purple to red for $n = 1 \, 000 \,$, as follows.}
    \includegraphics[width=0.7\textwidth, clip, trim=0 0 0 330]{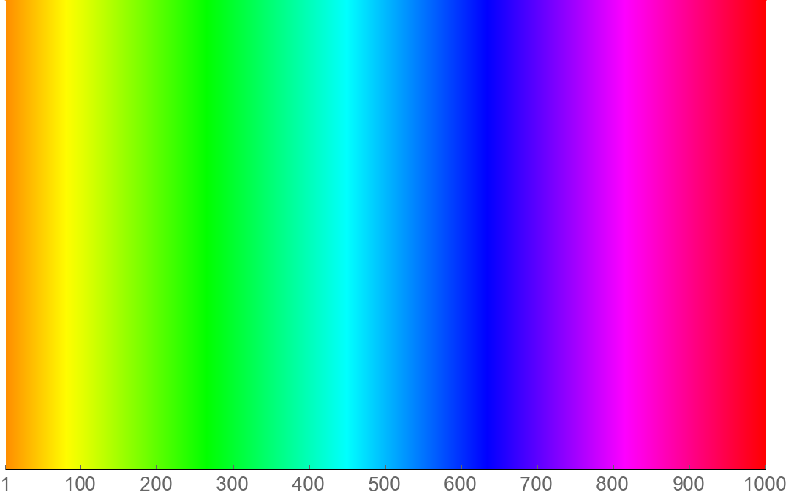}
    \label{fig:ComplexRoots}
\end{figure}

\subsection{Results} Using estimates from previous work by the first, second, and fourth author, we offer the following negative answer to Conjecture \ref{conj:HeimNeuhauserHurwitz}.

\begin{thm}\label{thm:DArcaisNotHurwitz}
    Let $\mathcal{A}(n) := \#\left\{ k \leq n : R_k^\sigma(X) \text{ is not Hurwitz.}\right\}$. Then
    \begin{equation*}
        \liminf_{n \to \infty} \frac{\mathcal{A}(n)}{n} > 0\, .
    \end{equation*}
\end{thm}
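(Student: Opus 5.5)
We need to show that for a positive proportion of $k$, $R_k^\sigma(X)$ has a root with nonnegative real part. The most accessible handle on Hurwitz stability is its necessary conditions on the coefficients: by the Routh--Hurwitz criterion, all Hurwitz determinants must be positive. In particular, if
\[
R_n^\sigma(X)=\sum_{m=0}^{n-1} b_m X^m, \qquad b_m = A_{n,m+1},
\]
where $A_{n,m}$ denotes the coefficient of $X^m$ in $P_n^\sigma(X)$, then the second Hurwitz determinant at the low end must satisfy
\[
b_1 b_2 - b_0 b_3 > 0 .
\]
A symmetric condition holds at the top end, $b_{d-1}b_{d-2}-b_d b_{d-3}>0$ with $d=n-1$. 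The plan is to show that one of these $2\times 2$ (or a $3\times 3$) Hurwitz minors becomes negative for a positive-density set of $n$.

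The low coefficients are explicit. From \eqref{eqn:darcais:exp} one reads off
\[
A_{n,1}=\frac{\sigma(n)}{n}, \qquad A_{n,2}=\frac12\sum_{j+l=n}\frac{\sigma(j)\sigma(l)}{jl},
\]
and similarly for $A_{n,3}$. These can be compared using the asymptotics of convolution sums of $\sigma(j)/j$, presumably the ``estimates from previous work'' that the introduction mentions. The key point is that $A_{n,1}=\sigma(n)/n$ fluctuates arithmetically. By contrast, $A_{n,2}\sim \zeta(2)^2\log n\cdot(\text{const})$ and $A_{n,3}\sim c(\log n)^2$ are smooth, with lower-order corrections that depend only mildly on $n$.

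The first condition reads
\[
A_{n,1}A_{n,4}\;<\;A_{n,2}A_{n,3},
\]
after setting $b_0=A_{n,1}$, $b_1=A_{n,2}$, $b_2=A_{n,3}$, $b_3=A_{n,4}$. Heuristically $A_{n,m}\approx(\zeta(2)\log n)^{m-1}/(m-1)!\cdot(\text{const})$, so the two sides differ by a bounded factor times $(\log n)^4$. The condition fails precisely when $\sigma(n)/n$ exceeds a threshold constant $C$ determined by the leading constants in these asymptotics. Note also that the $\sigma(n)/n$ term feeds into $A_{n,m}$ only at lower order.

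Since $\sigma(n)/n$ has a continuous limiting distribution with unbounded support (Davenport), the set $\{n:\sigma(n)/n>C+\varepsilon\}$ has positive density for any fixed $C$. That would give the theorem.

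The main obstacle is making the asymptotic comparison rigorous and checking that the threshold is really finite, i.e.\ that the leading terms do not cancel in a way that forces the inequality to hold regardless of $\sigma(n)/n$. If the low-end determinant proves asymptotically insensitive, I would try the top end instead. There $P_n^\sigma(X)=\frac{X^n}{n!}+\frac{\sigma(2)\binom n2}{\cdots}X^{n-1}+\cdots$, and the coefficients are explicit polynomials in $n$. Since these carry no arithmetic fluctuation, I would then fall back to a mixed-order Hurwitz minor or to the argument-principle counting approach.

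I expect the cleanest route to be the low-end $2\times 2$ minor. The steps are:
\begin{enumerate}
\item Establish error-term asymptotics for $A_{n,2}, A_{n,3}, A_{n,4}$ uniformly in $n$, with dependence on $n$ only through smooth terms plus $O(\sigma(n)/n\cdot(\log n)^{m-2})$.
\item Deduce that the minor is negative once $\sigma(n)/n>C$ and $n$ is large.
\item Invoke the positive density of abundant-type sets $\{\sigma(n)/n>C\}$.
\end{enumerate}
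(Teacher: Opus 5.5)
Your architecture is the same as the paper's. The low-end minor $b_1b_2-b_0b_3=p_n^\sigma(2)p_n^\sigma(3)-p_n^\sigma(1)p_n^\sigma(4)$ is the second Hurwitz determinant of the reciprocal polynomial $X^{n-1}R_n^\sigma(X^{-1})$, which is Hurwitz if and only if $R_n^\sigma$ is. It is negative once $\sigma_{-1}(n)=p_n^\sigma(1)$ exceeds a constant, and such $n$ have positive density.

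The genuine problem is Step 1: the growth you assume for the coefficients is false, so the asymptotics you plan to establish do not exist. Since $\sigma_{-1}\ge 1$,
\[ p_n^\sigma(k)=\frac{1}{k!}\sum_{i_1+\cdots+i_k=n}\sigma_{-1}(i_1)\cdots\sigma_{-1}(i_k)\ge\frac{1}{k!}\binom{n-1}{k-1}, \]
so $p_n^\sigma(k)$ grows like $n^{k-1}$, not like $(\log n)^{k-1}$. Already $p_n^\sigma(2)\ge (n-1)/2$, and Starr's asymptotic quoted in the paper gives $p_n^\sigma(2)\sim\frac54\sigma_{-3}(n)\,n$. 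Nor are these coefficients smooth in $n$: their leading terms carry the arithmetic factors $\sigma_{-3}(n),\sigma_{-5}(n),\sigma_{-7}(n)$. Consequently the ratio $p_n^\sigma(2)p_n^\sigma(3)/p_n^\sigma(4)$ does not converge, and there is no constant threshold at which the condition fails ``precisely''; a constant threshold is only sufficient.

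Your conclusion survives for two reasons. The two products have the same total degree ($0+3=1+2$) under either scaling. The arithmetic factors lie between $1$ and $\zeta(2k-1)$, so that ratio stays bounded. What should replace Step 1 is the observation that no asymptotics are needed at all, which also removes your worry about cancelling leading terms: one-sided bounds suffice.

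\begin{enumerate}
\item The lower bound $p_n^\sigma(4)\ge\frac{1}{4!}\binom{n-1}{3}$ is the free one above.
\item The upper bounds $p_n^\sigma(2)=O(n)$ and $p_n^\sigma(3)=O(n^2)$ are imported by the paper from earlier work (Proposition~\ref{prop:BoundsFromSmallestCounterexample}). It simplifies them in Lemma~\ref{lem:BoundsP(2)P(3)} to $p_n^\sigma(2)\le\frac{47}{24}n$ and $p_n^\sigma(3)\le\frac{76\,129}{116\,352}n^2$ for $n\ge 303$.
\item You could also get these upper bounds yourself. By Cauchy--Schwarz, $(\sigma_{-1}*\sigma_{-1})(n)\le\sum_{i<n}\sigma_{-1}(i)^2=O(n)$, and one further convolution with $\sigma_{-1}$ gives the $O(n^2)$ bound.
\end{enumerate}

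This yields an explicit sufficient threshold, $\sigma_{-1}(n)>215.26\ldots$ in the paper. For the density step, Davenport's theorem is correct but heavier than necessary. Choose $m$ with $\sigma_{-1}(m)>C$; the paper takes $m=\ell!$ with $H_\ell>C$. Then $\sigma_{-1}(km)\ge\sigma_{-1}(m)$ gives $\liminf_{n}\mathcal{A}(n)/n\ge 1/m$ directly.
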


\begin{rmk}
  In Section \ref{sec:proofhurwitz}, we establish a lower bound on this limit inferior, by showing that a counterexample to \( P_k^\sigma(X) \) being Hurwitz must occur by the point \( \ell! \), where 
\begin{align*}
\ell = \smash[b]{\left\lceil\exp\left(\frac{25\,046\,441}{116\, 352}\right)\right\rceil} = 
3\,&077\,030\,883\,517\,849\,244\,056\,113\,561\,601\,934\,895\,798\,281\, 720\,\\[-1ex]&971\,738\,852\,399\,001\,591\,620\,287\,219\,488\,043\,505\,919\,072\,573\,770\, .
\end{align*}

\end{rmk}

\section{Preliminaries}

We collect some preliminary properties that will be used while proving our main result.

\subsection{The Hurwitz--Routh criterion} For $1\leq k\leq d$, let
\[
\Delta_k(f)
:=
\det\bigl(a_{d-2j+i}\bigr)_{1\leq i,j\leq k},
\]
where $a_m:=0$ for $m<0$ and for $m>d$. Thus,
\[
\Delta_k(f)
=
\det
\begin{pmatrix}
 a_{d-1} & a_{d-3} & a_{d -5} & \cdots \\
 a_{d}   & a_{d-2} & a_{d-4} & \cdots \\
 0         & a_{d-1} & a_{d-3} & \cdots \\
 0         & a_{d}   & a_{d-2} & \cdots \\
 \vdots    & \vdots    & \vdots    & \ddots
\end{pmatrix}_{k \times k}.
\]
\begin{thm}[\protect{Hurwitz \cite[p. 274]{Hurwitz}}]\label{thm:Hurwitz}
    A polynomial $f$ is Hurwitz if and only if
    \begin{equation*}
        \Delta_k(f) > 0
    \end{equation*}
    for all $1 \leq k \leq d$.
\end{thm}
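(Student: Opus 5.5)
We prove Theorem~\ref{thm:Hurwitz} by induction on the degree $d$, using a Routh-type reduction step that lowers the degree by one and shifts the sequence of Hurwitz determinants by one index. Throughout we normalise $a_d>0$. Write
\[
f(X)=p(X)+q(X),
\]
where $p$ collects the monomials $X^d,X^{d-2},\dots$ and $q$ collects $X^{d-1},X^{d-3},\dots$. For $d=1$ the claim is immediate, since the root $-a_0/a_1$ lies in the left half-plane iff $\Delta_1=a_0>0$. Note also that $\Delta_1(f)=a_{d-1}$, and that a Hurwitz polynomial with $a_d>0$ has all coefficients positive, being a product of factors $X+r$ and $X^2+sX+t$ with $r,s,t>0$. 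Hence in both directions of the equivalence we may assume $a_{d-1}>0$. Put $c:=a_d/a_{d-1}>0$ and
\[
f_1(X):=f(X)-cXq(X).
\]
This polynomial has degree $d-1$ and leading coefficient $a_{d-1}>0$.

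\emph{Step 1 (determinants).} In the Hurwitz matrix of $f$, subtract $c$ times each odd row (the $a_{d-1},a_{d-3},\dots$ rows) from the even row directly below it. This kills every entry $a_d$. After the operation, the first column is $(a_{d-1},0,0,\dots)^T$, and the lower-right $(k-1)\times(k-1)$ block is the Hurwitz matrix of $f_1$. Expanding along the first column gives
\[
\Delta_k(f)=a_{d-1}\,\Delta_{k-1}(f_1), \qquad 2\le k\le d .
\]
I expect only the index bookkeeping to need care here.

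\emph{Step 2 (root location).} We show that, when $a_{d-1}>0$, $f$ is Hurwitz iff $f_1$ is Hurwitz. Consider the homotopy
\[
F_\mu:=f-\mu cXq, \qquad \mu\in[0,1].
\]
For real $y$, the values $p(iy)$ and $iy\,q(iy)$ both lie on one line through $0$ (real or imaginary, depending on the parity of $d$), while $q(iy)$ lies on the perpendicular line. Hence $F_\mu(iy)=0$ forces $q(iy)=0$ and $p(iy)=0$, that is, $f(iy)=0$; the same argument applies with $f$ replaced by $f_1$. So if either $f$ or $f_1$ has no purely imaginary roots, then no $F_\mu$ has one.

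For $\mu<1$ the degree of $F_\mu$ stays equal to $d$, so by continuity of roots the number of roots in the open right half-plane is constant on $[0,1)$. As $\mu\to1$, exactly one root escapes to infinity, and the remaining $d-1$ roots converge to those of $f_1$. The escaping root is asymptotically
\[
-\frac{a_{d-1}}{(1-\mu)a_d},
\]
which is large and negative real, so it lies in the left half-plane. Therefore $f$ and $f_1$ have the same number of right half-plane roots and no imaginary roots, which gives the equivalence.

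\emph{Step 3 (induction).} Suppose $f$ is Hurwitz. Then $a_{d-1}>0$, so $f_1$ is Hurwitz by Step~2. By induction $\Delta_{k-1}(f_1)>0$ for all relevant $k$, and Step~1 gives $\Delta_k(f)>0$. Conversely, suppose all $\Delta_k(f)>0$. Then $a_{d-1}=\Delta_1(f)>0$, and Step~1 gives $\Delta_{k-1}(f_1)>0$ for all $k$. By induction $f_1$ is Hurwitz, and by Step~2 so is $f$.

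The main obstacle is the root-continuity argument in Step~2, in particular controlling the root that escapes to infinity as the degree drops at $\mu=1$. I would handle it by passing to the reversed polynomial $X^dF_\mu(1/X)$. For that polynomial the escaping root becomes a root tending to $0$ along the negative real axis, so Hurwitz's (or Rouché's) theorem applies on a fixed contour.
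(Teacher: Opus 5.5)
Your proof is correct, and it takes a different route from the paper. The paper gives no argument for this statement; it only cites Hurwitz's 1895 paper, whose proof builds on Hermite's method of quadratic forms and Cauchy indices. You instead give a self-contained elementary argument: Routh-type degree reduction plus a continuity argument, essentially Meinsma's 1995 proof of the Routh--Hurwitz test.

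The details check out.
\begin{itemize}
\item In Step~1, the row operations leave the first column $(a_{d-1},0,\dots,0)^T$ and a lower-right block equal to the Hurwitz matrix of $f_1$. This includes the entries whose indices fall outside $[0,d]$, so $\Delta_k(f)=a_{d-1}\Delta_{k-1}(f_1)$.
\item In Step~2, the perpendicular-lines observation shows that $f$, $f_1$ and every $F_\mu$ have exactly the same imaginary-axis roots.
\item Your reversed-polynomial fix for the escaping root works. $X^dF_\mu(1/X)$ has constant term $(1-\mu)a_d>0$ and linear coefficient $a_{d-1}>0$, so its root near $0$ is simple, real and negative. Hence the escaping root of $F_\mu$ is a large negative real number.
\end{itemize}

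One point you handled correctly deserves to be said explicitly. The induction necessarily leaves the class of positive-coefficient polynomials used in the paper's definition: $f=X^3+X^2+X+2$ gives $f_1=X^2-X+2$. So the theorem must be proved, as you do, for real polynomials with positive leading coefficient.

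What each approach buys: the citation costs the paper nothing, and Hermite's method yields general root-count information. Your argument is a complete proof needing only continuity of roots and one determinant identity. Its Step~2 also shows that $f$ and $f_1$ have the same number of right half-plane roots whenever there are no imaginary-axis roots.
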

In particular,
\[
\Delta_1(f)=a_{d-1} > 0
\]
and
\[
\Delta_2(f)
=
\begin{vmatrix}
 a_{d-1} & a_{d-3}\\
 a_{d}   & a_{d-2}
\end{vmatrix}
=
a_{d-1}a_{d-2}-a_{d}a_{d-3} > 0  \,,
\]
are necessary conditions.  It is well known that if some minor \( \Delta_k(f) < 0 \) is strictly negative, then there is actually a root in the open right half-plane \cite{Adm2018}.

\subsection{Lower bounds for \texorpdfstring{$p_n^\sigma(1)$}{p\_n\textasciicircum{}sigma(1)} and \texorpdfstring{$p_n^\sigma(4)$}{p\_n\textasciicircum{}sigma(4)}}

Let
\[
P_n^\sigma(X) := \sum_{k = 0}^n p_n^\sigma(k)X^k.
\]
A simple elegant choice of variables yields a lower bound for the first coefficient. Define
\[
\rho(n) := \lcm\{k: 1 \leq k \leq n\}.
\]

\begin{lemma}\label{lem:BoundP(1)}
Let $n \in \N$ and $H_\ell$ be the $\ell$-th harmonic number. If $\rho(\ell)$ divides $n$, then
\[
p_n^\sigma(1) \geq H_\ell\, > \log(\ell).
\]
\end{lemma}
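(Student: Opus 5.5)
Our plan is to read off the linear coefficient of $P_n^\sigma(X)$ from the exponential representation \eqref{eqn:darcais:exp}. Writing $S(q) := \sum_{j\ge 1} \sigma(j) q^j/j$, we have $\exp(XS(q)) = 1 + XS(q) + O(X^2)$. Hence the coefficient of $X^1 q^n$ is exactly $p_n^\sigma(1) = \sigma(n)/n$. The lemma therefore reduces to a purely arithmetic statement about the divisor sum.

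Next we rewrite $\sigma(n)/n$ as a sum over divisors. The map $d \mapsto n/d$ permutes the divisors of $n$, so
\[
\frac{\sigma(n)}{n} = \frac{1}{n}\sum_{d\mid n} \frac{n}{d} = \sum_{d \mid n} \frac{1}{d}.
\]
All terms are positive, so we obtain a lower bound by keeping only the divisors $d \le \ell$. Now suppose $\rho(\ell) = \lcm\{1,\dots,\ell\}$ divides $n$. Then every $k$ with $1\le k\le \ell$ divides $\rho(\ell)$ and hence divides $n$. This gives
\[
p_n^\sigma(1) = \sum_{d\mid n}\frac1d \ \ge\ \sum_{k=1}^{\ell}\frac1k = H_\ell.
\]

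Finally, the strict inequality $H_\ell > \log \ell$ is the standard integral comparison. Since $1/k > \int_k^{k+1} dt/t$ for every $k\ge1$, we get
\[
H_\ell > \int_1^{\ell+1}\frac{dt}{t} = \log(\ell+1) > \log \ell.
\]

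There is no serious obstacle. The only point needing care is the identification of $p_n^\sigma(1)$ with $\sigma(n)/n$: this uses that the $X^1$-term of the exponential comes solely from the linear term of its series. That fact is immediate because $S(q)$ carries no $X$-dependence.
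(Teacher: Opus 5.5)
Your proposal is correct and is essentially the argument the paper has in mind: the paper's proof just says ``This is immediate,'' and you have written out the omitted steps. Those steps are $p_n^\sigma(1)=\sigma(n)/n=\sigma_{-1}(n)$, then keeping only the divisors $k\le \ell$ (all of which divide $n$ when $\rho(\ell)\mid n$), and finally the integral comparison $H_\ell>\log(\ell+1)>\log(\ell)$.
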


\begin{proof}
    This is immediate.
\end{proof}

Even though this estimate appears to be rather ``coarse'', it proves effective eventually. 

\begin{lemma}\label{lem:BoundP(4)}
Let \( n \geq 4 \), then
\begin{align*}
    p_n^\sigma(4) \geq \frac{1}{4!} \binom{n-1}{3} \,
\intertext{and in particular, if $n \geq 41$, we have}
    p_n^\sigma(4) > \frac{n^3}{4! \cdot 7}\, .
\end{align*}
    
\end{lemma}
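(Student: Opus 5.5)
We plan to expand $P_n^\sigma(X)$ via the exponential in \eqref{eqn:darcais:exp} and keep only the contributions to the coefficient of $X^4$ that come from the terms with $\sigma(j)/j \geq 1/j$. Write
\[
\exp\Bigl(X \sum_{j\ge1} \sigma(j)\tfrac{q^j}{j}\Bigr) = \sum_{k\ge0} \frac{X^k}{k!}\Bigl(\sum_{j\ge1}\sigma(j)\tfrac{q^j}{j}\Bigr)^k .
\]
Hence
\[
p_n^\sigma(4) = \frac{1}{4!}\sum_{\substack{j_1+j_2+j_3+j_4=n\\ j_i\ge1}} \prod_{i=1}^4 \frac{\sigma(j_i)}{j_i}.
\]
Every term is positive. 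Since $\sigma(j)\ge j$, each factor $\sigma(j_i)/j_i$ is at least $1$. So $p_n^\sigma(4)$ is at least $\frac{1}{4!}$ times the number of compositions of $n$ into $4$ positive parts. That number is $\binom{n-1}{3}$ by stars and bars. This gives the first inequality for $n\ge4$.

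For the second inequality we need
\[
\binom{n-1}{3} = \frac{(n-1)(n-2)(n-3)}{6} > \frac{n^3}{7},
\]
which is equivalent to
\[
7(n-1)(n-2)(n-3) > 6n^3 .
\]
Expanding gives
\[
n^3 - 42n^2 + 77n - 42 > 0 .
\]
We would check this at $n=41$, where the left-hand side is $41^3-42\cdot41^2+77\cdot41-42 = -1681+3157-42 = 1434>0$. Then we argue monotonicity: the derivative $3n^2-84n+77$ is positive once $n\ge 28$. Alternatively, note that $(1-1/n)(1-2/n)(1-3/n)$ is increasing in $n$ and exceeds $6/7$ at $n=41$.

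The only mildly delicate point is confirming the threshold $41$ numerically. Everything else is immediate from the positivity of the coefficients and from $\sigma(j)\ge j$.
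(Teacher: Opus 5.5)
Your proposal is correct and follows essentially the same route as the paper: expand the exponential, note that $\sigma(j)/j=\sigma_{-1}(j)\ge 1$, count the $\binom{n-1}{3}$ compositions, and reduce the second bound to $n^3-42n^2+77n-42>0$. Your explicit check at $n=41$ (value $1434$) together with the monotonicity argument for $n\ge 28$ fills in the step the paper only asserts.
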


\begin{proof}
    We have
    \[
        p_n^\sigma(4) = \frac{1}{4!} \sum_{i_1 + \cdots + i_4 = n} \sigma_{-1}(i_1) \cdots \sigma_{-1}(i_4) \geq \frac{1}{4!} \sum_{i_1 + \cdots + i_4 = n} 1 \,.
    \]
    A standard combinatorial argument shows that there are 
    \[
        \binom{n-4 + 3}{3} = \binom{n-1}{3} \,,
    \]
    compositions of \( n = i_1 + \cdots + i_4 \) into 4 parts with sizes \( i_1 , \ldots, i_4 \geq 1 \). So the first inequality follows.

    For the second one, we explicitly examine
    \[
    6 \cdot \left(7 \cdot \binom{n - 1}{3} - n^3\right) = 7 (n - 1)(n - 2)(n - 3) - 6n^3 = n^3 - 42n^2 + 77n - 42
    \]
    which is strictly positive for $n \geq 41$.
\end{proof}

\subsection{Upper bounds for \texorpdfstring{$p_n^\sigma(2)$}{p\_n\textasciicircum{}sigma(2)} and \texorpdfstring{$p_n^\sigma(3)$}{p\_n\textasciicircum{}sigma(3)}} In a recent preprint \cite{CharltonHeimStum}, the first, second, and fourth author provided the following bounds for $p_n^\sigma(2)$ and $p_n^\sigma(3)$.

\begin{prop}[\protect{\cite[Proposition 2.1]{CharltonHeimStum}}] \label{prop:BoundsFromSmallestCounterexample}
    Define
    \begin{align*}
        p_n^\sigma(2)_{\up} &\coloneqq \frac{1}{2!} \left(\frac{5}{2} (n-1) \sigma_{-3}(n) + (1 + \log(n))^2\right) \,;\\[1ex]
        p_n^\sigma(3)_{\up} &\coloneqq \begin{aligned}[t]\frac{1}{3!}\bigg( \frac{35(n-1)^2}{16} \sigma_{-5}(n) & {} + \frac{15(n-1)}{4} \sigma_{-4}(n) \zeta(3) (1 + \log(n)) \\[-1ex]
	    &{} + \frac{5}{2} (n-1) \sigma_{-3}(n) \zeta(2)  + n (1 + \log(n))^3\bigg)\, .\end{aligned}
    \end{align*}
    Then, for all $n \in \N$, we have
    \begin{align*}
        p_n^\sigma(2)_{\up} \geq p_n^\sigma(2) 
        \, ,\\*
        p_n^\sigma(3)_{\up} \geq p_n^\sigma(3) 
        \,.
    \end{align*}
\end{prop}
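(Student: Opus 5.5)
We follow the approach of \cite{CharltonHeimStum}: expand the coefficients as divisor sums, turn each one into a lattice-point count, and bound that count by a main term plus an error. Taking the coefficient of $X^k$ in \eqref{eqn:darcais:exp} gives
\[
p_n^\sigma(k)=\frac{1}{k!}\sum_{i_1+\cdots+i_k=n}\ \prod_{j=1}^k \sigma_{-1}(i_j),\qquad i_j\geq 1 .
\]
This is the same formula used in the proof of Lemma~\ref{lem:BoundP(4)}. We then write $\sigma_{-1}(i)=\sum_{d\mid i} d^{-1}$ and interchange the order of summation. For $k=2$ this gives
\[
2!\,p_n^\sigma(2)=\sum_{d,e\leq n}\frac{1}{de}\,N(d,e),\qquad N(d,e):=\#\{1\leq i\leq n-1:\ d\mid i,\ e\mid n-i\}.
\]

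\emph{The case $k=2$.} By the Chinese remainder theorem, $N(d,e)=0$ unless $g:=\gcd(d,e)$ divides $n$. If $g\mid n$, the admissible $i$ form one residue class modulo $\operatorname{lcm}(d,e)=de/g$, so $N(d,e)\leq (n-1)g/(de)+1$.

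The ``$+1$'' terms contribute at most $\bigl(\sum_{d\leq n}1/d\bigr)^2\leq (1+\log n)^2$.

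For the main term, write $d=gd'$ and $e=ge'$ with $\gcd(d',e')=1$. Then
\[
(n-1)\sum_{g\mid n}\frac{g}{g^4}\sum_{\gcd(d',e')=1}\frac{1}{d'^2e'^2}=(n-1)\,\sigma_{-3}(n)\,\frac{\zeta(2)^2}{\zeta(4)}=\frac52 (n-1)\sigma_{-3}(n),
\]
which is exactly $2!\,p_n^\sigma(2)_{\up}$. This explains the constant $5/2$.

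\emph{The case $k=3$.} We use three divisors $d,e,f$ and count the positive solutions of $da+eb+fc=n$. We fix $a$ (equivalently $i_1=da$) and apply the $k=2$ count to the remaining pair with $n$ replaced by $n-da$. This bounds the number of admissible $(b,c)$ by $(n-da-1)\gcd(e,f)/(ef)+1$, provided $\gcd(e,f)\mid n-da$.

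Summing over $a$ splits the total into four pieces, which should produce the four terms of $p_n^\sigma(3)_{\up}$:
\begin{itemize}
\item a quadratic main term, from summing $(n-da)$ over $a$ in the relevant progressions, roughly $\tfrac12 (n-1)^2$ times a gcd factor;
\item two mixed terms, where one ``$+1$'' meets one linear count; these give the $\zeta(3)(1+\log n)\sigma_{-4}(n)$ and $\zeta(2)\sigma_{-3}(n)$ contributions;
\item the pure error term, bounded by $n\,(\sum 1/d)^3\leq n(1+\log n)^3$.
\end{itemize}

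\emph{Expected main obstacle.} The main difficulty is the arithmetic bookkeeping of the quadratic main term. The solvability condition now involves $\gcd(d,e,f)$ and the pairwise gcd's jointly. One has to organise the sum over $d,e,f$ by these gcd's, show that the resulting multiplicative constant is at most $35/16$ times $\sigma_{-5}(n)$, and treat the boundary terms of the $a$-summation carefully enough that nothing worse than the stated lower-order terms appears.

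My plan for this step is the following. First, reduce the constant to an Euler product over primes, as with $\zeta(2)^2/\zeta(4)$ above. Then bound that product termwise. If the local factors do not close exactly, use the crude estimate $\gcd(e,f)\leq$ the $g$ dividing $n$, and absorb the loss into the $\sigma_{-5}(n)$ factor.
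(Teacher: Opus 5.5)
The paper gives no proof of this proposition; it is quoted from \cite{CharltonHeimStum}. Your argument therefore has to stand on its own.

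Your case $k=2$ does stand. The CRT count, the harmonic bound for the ``$+1$'' terms and $\zeta(2)^2/\zeta(4)=5/2$ reproduce $2!\,p_n^\sigma(2)_{\up}$ exactly.

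For $k=3$ there is a genuine gap. You stop at the step that decides the inequality, namely the leading constant, and the fallback you offer would fail:
\begin{itemize}
\item The constant $35/16$ has no slack. By Starr's asymptotic \cite{Starr}, quoted in the closing Remarks, $3!\,p_n^\sigma(3)\sim\frac{\zeta(2)^3}{2\zeta(6)}\,n^2\sigma_{-5}(n)=\frac{35}{16}\,n^2\sigma_{-5}(n)$. Any larger leading constant gives a bound weaker than the stated one for large $n$. Since $\sigma_{-5}(n)$ already appears in the target, there is nothing to ``absorb'' a loss into.
\item Your crude estimate is false. $\gcd(e,f)$ need not divide $n$: take $d=1$, $e=f=2$ and $n$ odd. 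Only $\gcd(d,e,f)$ must divide $n$.
\item Your matching of the lower-order terms is not what your own decomposition produces. With $\#\{a\}\le n$ there is only one mixed term. This is harmless for an upper bound.
\end{itemize}

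The missing idea is that the pairwise gcd cancels. Put $g=\gcd(e,f)$ and $h=\gcd(d,g)=\gcd(d,e,f)$. Let $M(d,e,f)$ be the number of positive solutions of $da+eb+fc=n$, so that $3!\,p_n^\sigma(3)=\sum_{d,e,f}M(d,e,f)/(def)$.

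The condition $g\mid n-da$ is solvable in $a$ iff $h\mid n$, and then $a$ runs through a single class modulo $m=g/h$. The terms $n-1-da$ decrease along this progression. Hence their sum is at most the first term plus the integral, namely $\frac{(n-1)^2}{2dm}+(n-1)$. Since $\frac{g}{ef}\cdot\frac{1}{dm}=\frac{h}{def}$, for $d,e,f\le n-2$ with $h\mid n$ you get
\[
M(d,e,f)\le\frac{(n-1)^2\,h}{2def}+\frac{(n-1)\,g}{ef}+(n-2).
\]
The last term bounds the ``$+1$'' summed over at most $n-2$ values of $a$.

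After weighting by $1/(def)$, the three terms sum as follows:
\begin{itemize}
\item The first gives $\frac{(n-1)^2}{2}\,\sigma_{-5}(n)\sum_{\gcd(d',e',f')=1}(d'e'f')^{-2}=\frac{(n-1)^2}{2}\,\sigma_{-5}(n)\,\frac{\zeta(2)^3}{\zeta(6)}=\frac{35}{16}(n-1)^2\sigma_{-5}(n)$.
\item The second gives at most $(n-1)H_n\sum_{e,f}\gcd(e,f)/(e^2f^2)=\frac52\zeta(3)(n-1)H_n$.
\item The third gives at most $nH_n^3$.
\end{itemize}
As $H_n\le 1+\log n$ and $\frac52<\frac{15}{4}\sigma_{-4}(n)$, the total lies below $3!\,p_n^\sigma(3)_{\up}$. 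The $\zeta(2)\sigma_{-3}(n)$ term is not even needed.

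With this computation inserted, your route proves the proposition. As written, the $k=3$ half is unproven.
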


To exploit the upper bounds properly, we simplify the expressions via the following straightforward lemma whose proof we omit.

\begin{lemma}\label{lem:TechnicalBounds}
    For any $n \in \N$ and $k \geq 2$, we have
    \begin{equation*}
        \sigma_{-k}(n) < \zeta(k) < 1 + \frac{1}{k - 1}\, .
    \end{equation*}
    If $n \geq 303$, we also have
    \begin{equation*}
        \frac{1 + \log(n)}{\sqrt[3]{n}} < 1\, . 
    \end{equation*}
\end{lemma}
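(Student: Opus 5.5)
The two inequalities are independent, and I would prove them separately: the first by comparing a divisor sum with the full zeta series, the second by a monotonicity argument plus one numerical check.

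\emph{Divisor-sum bound.} Write $\sigma_{-k}(n)=\sum_{d\mid n} d^{-k}$. This is a sub-sum of the convergent series $\zeta(k)=\sum_{d\ge 1} d^{-k}$, whose terms are all positive. The index $d=n+1$ never divides $n$, so at least one positive term is omitted, and hence $\sigma_{-k}(n)<\zeta(k)$.

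For the second inequality I would use the standard integral comparison. Since $t\mapsto t^{-k}$ is strictly decreasing on $[1,\infty)$, we have $d^{-k}<\int_{d-1}^{d} t^{-k}\,dt$ for every $d\ge 2$. Summing over $d\ge 2$ gives
\[
\zeta(k)<1+\int_1^\infty t^{-k}\,dt=1+\frac{1}{k-1}
\]
for every $k\ge 2$.

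\emph{Logarithm bound.} Put $g(x)=x^{1/3}-1-\log x$ for $x>0$, so that the claim is $g(n)>0$ for $n\ge 303$. Its derivative is
\[
g'(x)=\tfrac13 x^{-2/3}-x^{-1}=x^{-1}\bigl(\tfrac13 x^{1/3}-1\bigr),
\]
which is positive exactly when $x>27$. Hence $g$ is strictly increasing on $[27,\infty)$, and it suffices to show $g(303)>0$.

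\emph{Main obstacle.} The check $g(303)>0$ is numerically tight. Indeed $303^{1/3}\approx 6.7162$ while $1+\log 303\approx 6.7137$, so the margin is only about $0.0025$, and the estimate must be done rigorously rather than by eye.

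First I bound the logarithm: $\log 303=\log 3+2\log 10+\log(1.01)<5.7138$, using enclosures such as $\log 3<1.09862$, $\log 10<2.302586$ and $\log(1.01)<0.01$. Then I verify by exact decimal multiplication that $6.7138^3<303$; the product is roughly $302.63$.

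Together these give
\[
1+\log 303<6.7138<303^{1/3},
\]
so $g(303)>0$. By monotonicity, $g(n)>0$ for all $n\ge 303$, which is the second claim of the lemma.
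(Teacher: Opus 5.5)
Your proof is correct and complete. The paper omits this proof as straightforward, and your three steps fill it in in the natural way: dropping the term $d=n+1$ from $\zeta(k)$, the integral comparison, and the monotonicity of $x^{1/3}-1-\log x$ on $[27,\infty)$ together with the rigorous check $1+\log 303<6.7138$ and $6.7138^3\approx 302.625<303$. There is one cosmetic slip in your heuristic remark: $303^{1/3}\approx 6.7166$ rather than $6.7162$, so the margin is about $0.0028$. This does not affect the rigorous check.
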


We now put these together.

\begin{lemma}\label{lem:BoundsP(2)P(3)}
    For $n \geq 303$, we get
    \begin{align*}
        p_n^\sigma(2) &\leq \frac{47}{24}n \,
    \intertext{as well as}
        p_n^\sigma(3) &\leq \frac{76\, 129}{116\, 352}n^2\, .
    \end{align*}
\end{lemma}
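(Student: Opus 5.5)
The plan is a direct substitution: insert the elementary estimates of Lemma \ref{lem:TechnicalBounds} into the explicit upper bounds $p_n^\sigma(2)_{\up}$ and $p_n^\sigma(3)_{\up}$ of Proposition \ref{prop:BoundsFromSmallestCounterexample}. I would then reduce each summand to a rational constant times $n$ (respectively $n^2$), using the hypothesis $n \geq 303$ whenever a logarithmic factor has to be absorbed. Nothing beyond Proposition \ref{prop:BoundsFromSmallestCounterexample} and Lemma \ref{lem:TechnicalBounds} should be needed.

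For $p_n^\sigma(2)$, first bound $(n-1)\sigma_{-3}(n) < n\,\zeta(3)$. Next, the second part of Lemma \ref{lem:TechnicalBounds} gives $1+\log(n) < n^{1/3}$ for $n\geq 303$, so
\[
(1+\log n)^2 < n^{2/3} = n \cdot n^{-1/3} \leq n \cdot 303^{-1/3}.
\]
Dividing $p_n^\sigma(2)_{\up}$ by $n$ then leaves
\[
\tfrac12\Bigl(\tfrac52\,\zeta(3) + c\Bigr),
\]
where $c$ is a rational upper bound for $303^{-1/3}$, or for $(1+\log 303)^2/303$ if monotonicity of $(1+\log x)^2/x$ on $[303,\infty)$ is used instead. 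Choosing suitable rational upper bounds for $\zeta(3)$ and for $c$ should produce exactly $\tfrac{47}{24}$, that is $\tfrac12\cdot\tfrac{47}{12}$.

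For $p_n^\sigma(3)$, I would divide $p_n^\sigma(3)_{\up}$ by $n^2$ and treat the four terms separately:
\begin{itemize}
\item $\tfrac{35}{16}\tfrac{(n-1)^2}{n^2}\sigma_{-5}(n) < \tfrac{35}{16}\zeta(5)$;
\item $\tfrac{15}{4}\tfrac{n-1}{n^2}\sigma_{-4}(n)\zeta(3)(1+\log n) < \tfrac{15}{4}\zeta(4)\zeta(3)\,\tfrac{1+\log n}{n}$, and since $1+\log n < n^{1/3}$ the last factor is at most $303^{-2/3}$;
\item $\tfrac52\tfrac{n-1}{n^2}\sigma_{-3}(n)\zeta(2) < \tfrac52\zeta(3)\zeta(2)/303$;
\item $n(1+\log n)^3/n^2 = (1+\log n)^3/n < 1$, by Lemma \ref{lem:TechnicalBounds} cubed.
\end{itemize}
Each of these decreases in $n$ or is bounded uniformly, so all of them may be evaluated at $n=303$. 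Summing with rational upper bounds for $\zeta(2)$, $\zeta(3)$, $\zeta(4)$, $\zeta(5)$ and multiplying by $\tfrac16$ should give a constant at most $\tfrac{76\,129}{116\,352}$. The denominator $116\,352 = 6\cdot 19\,392$ suggests this is precisely how the constant is built up.

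The main obstacle is bookkeeping rather than conceptual. I need to choose rational bounds on the zeta values (the crude $1+\tfrac{1}{k-1}$ from the lemma, or sharper ones where required) and on the $n=303$ quantities tightly enough that the totals do not exceed the stated fractions. If the crude $\zeta(k) < 1+\tfrac1{k-1}$ overshoots, I would first try sharper decimal bounds such as $\zeta(3)<1.21$, $\zeta(5)<1.04$, $\zeta(4)<1.09$ and $\zeta(2)<1.65$. The other thing to verify is that each $n$-dependent ratio is monotone on $[303,\infty)$, which is elementary calculus.
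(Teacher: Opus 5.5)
Your proposal is correct and is essentially the paper's proof. The paper substitutes $\zeta(k) < 1 + \frac{1}{k-1}$, $n-1 < n$, and the bounds $(1+\log n)^2 < \frac{n}{6}$ and $1+\log n < \frac{n}{45}$ into $p_n^\sigma(2)_{\up}$ and $p_n^\sigma(3)_{\up}$; your route via $303^{-1/3} < \frac16$ and $303^{-2/3} < \frac{1}{45}$ yields exactly these bounds. This gives $\frac12\bigl(\frac{15}{4}+\frac16\bigr) = \frac{47}{24}$ and $\frac16\bigl(\frac{175}{64}+\frac16+\frac{5}{202}+1\bigr) = \frac{76\,129}{116\,352}$, so the crude zeta bounds already suffice and your fallback to sharper decimal values is never needed.
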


\begin{proof}
    Using the inequalities given in Lemma \ref{lem:TechnicalBounds} on the upper bound $p_n^\sigma(2)_\up$, we get
    \begin{align*}
        p_n^\sigma(2) &\leq \frac{1}{2!} \left(\frac{5}{2} (n-1) \sigma_{-3}(n) + (1 + \log(n))^2\right) \\
        &\leq \frac{1}{2} \left(\frac{5}{2} \cdot \frac{3}{2} + \frac{1}{6}\right)n = \frac{47}{24}n.
    \end{align*}
    where we used that $(1 + \log (n))^2 < \frac{n}{6}$. Similarly, we have
    \begin{align*}
        p_n^\sigma(3) &\leq \begin{aligned}[t]\frac{1}{3!}\bigg( \frac{35(n-1)^2}{16} \sigma_{-5}(n) & {} + \frac{15(n-1)}{4} \sigma_{-4}(n) \zeta(3) (1 + \log(n)) \\[-1ex]
	    &{} + \frac{5}{2} (n-1) \sigma_{-3}(n) \zeta(2)  + n (1 + \log(n))^3\bigg)\end{aligned}\\
        &\leq \frac{1}{3!} \left(\frac{35n^2}{16} \cdot \frac{5}{4} + \frac{15n}{4} \cdot \frac{4}{3} \cdot \frac{3}{2} \cdot \frac{n}{45} + \frac{5}{2} \cdot \frac{n^2}{303} \cdot \frac{3}{2} \cdot 2 + n^2\right) = \frac{76\, 129}{116\, 352}n^2
    \end{align*}
    where we used that $1 + \log(n) < \frac{n}{45}$.
\end{proof}

\section{Proof of the main result}\label{sec:proofhurwitz}

This section is devoted to the proof of Theorem \ref{thm:DArcaisNotHurwitz}.

\subsection{The reciprocal of the reduced D'Arcais polynomial} 
For $n\geq 2$, let
\[
f(X):=X^{n-1}R_n^\sigma\!\left(X^{-1}\right)
     =\sum_{k=1}^{n}p_n^\sigma(k)X^{n-k}.
\]
Note that for $z \in \C \setminus \{0\}$ with $\Re(z) > 0$, we also get $\Re(z^{-1}) > 0$. Hence, $f(X)$ is Hurwitz if and only if the same is true for $R_n^\sigma(X)$. Then
\[
\Delta_1(f)=p_n^\sigma(2)
\]
and
\[
\Delta_2(f)
=p_n^\sigma(2)p_n^\sigma(3) - p_n^\sigma(1)p_n^\sigma(4)\, .
\]

Consequently, if
\[
p_n^\sigma(1)p_n^\sigma(4) - p_n^\sigma(2)p_n^\sigma(3) >0,
\]
then $R_n^\sigma(X)$ has a zero in the open right complex half-plane.

\subsection{Proof of Theorem \ref{thm:DArcaisNotHurwitz}} We are now in a position to finalise our proof.

\begin{proof}[Proof of Theorem \ref{thm:DArcaisNotHurwitz}]
Using the bounds from above, we have that for \( n \geq 303 \)
\[
    p_n^\sigma(1)p_n^\sigma(4) - p_n^\sigma(2)p_n^\sigma(3)
    \geq \frac{\sigma_{-1}(n)n^3}{4! \cdot 7} - \frac{3\,578\,063n^3}{2\,792\,448} = \frac{116\, 352\sigma_{-1}(n) - 25\,046\,441}{19\,547\,136}\, .
\]
By Theorem \ref{thm:Hurwitz}, we deduce that $R_n^\sigma(X)$ is not Hurwitz if
\begin{equation}\label{eq:BoundForSigma-1ForNotHurwitz}
\sigma_{-1}(n) > \frac{25\,046\,441}{116\, 352} = 215.264\ldots \, .
\end{equation}
Since \( \sigma_{-1}(n) \) is unbounded by Lemma \ref{lem:BoundP(1)}, this inequality eventually holds, in particular by Lemma \ref{lem:BoundP(1)}, we may take $n = \ell!$ for an $\ell$ such that $H_\ell > \log(\ell) \geq \frac{25\,046\,441}{116\, 352}$. Hence, choose
\begin{align*}
\ell = \smash[b]{\left\lceil\exp\left(\frac{25\,046\,441}{116\, 352}\right)\right\rceil} = 
3\,&077\,030\,883\,517\,849\,244\,056\,113\,561\,601\,934\,895\,798\,281\, 720\,\\[-1ex]&971\,738\,852\,399\,001\,591\,620\,287\,219\,488\,043\,505\,919\,072\,573\,770\, .
\end{align*}
Taking the factorial of this number\footnote{Using \texttt{hypercalc} \url{https://www.mrob.com/pub/perl/hypercalc.html}} yields the claim for
\[
    \ell! = 10 ^ { 2.8632953142746\ldots \times 10 ^ {95}}\, .\footnote{This is a number with more digits than there are atoms in the observable universe \cite{EganLineweaver, PlanckCollab}.}
\]
As $\sigma_{-1}(n_1n_2) \geq \sigma_{-1}(n_1)$ for all $n_1, n_2 \in \N$, the inequality holds for all multiples of $\ell!$, too, and hence
\begin{equation*}
    \liminf_{n \to \infty} \frac{\mathcal{A}(n)}{n} \geq \frac{1}{\ell !} > 0
\end{equation*}
as claimed.
\end{proof}

\begin{rmks}\
    \begin{enumerate}
        \item As $\Delta_1(f) > 0$ by construction, our proof relied on $\Delta_2(f) < 0$ which is a sufficient criterion for $f$ not being Hurwitz. However, this is not necessary at all and hence, our approach does not allow any definite answer as to whether we found the smallest $n$ for which $R_n^\sigma(X)$ is not Hurwitz.
        \item For \( k \geq 2 \), the following asymptotic formula for the $k$-fold convolution  of \( \sigma_{-1} \) is given by Starr \cite{Starr}:
        \[
        (\overbrace{\sigma_{-1} \ast \cdots \ast \sigma_{-1}}^{k})(n) \sim \frac{n^{k-1} \zeta(2)^k \sigma_{-2k+1}(n)}{\Gamma(k)\zeta(2k)} \,, \text{as $ n \to \infty$}\,.
        \]
        In particular one is guided to the following improved upper and lower bounds, for any \( \epsilon > 0 \) and a sufficiently large \( n \):
        \begin{align*}
            p_n^\sigma(2) \leq \frac{1}{2!} \left( \frac{5}{2}  + \epsilon \right) \zeta(3)
            n
            \\
            p_n^\sigma(3) \leq \frac{1}{3!} \left( \frac{35}{16} + \epsilon \right) \zeta(5)
            n^2 
            \\
            p_n^\sigma(4) \geq \frac{1}{4!} \left( \frac{175}{144} - \epsilon \right) n^3
        \end{align*}
        The failure of the Hurwitz--Routh criterion is then witnessed when
        \begin{align*}
            \sigma_{-1}(n) &> \frac{\frac{1}{2!} \left( \frac{5}{2}  + \epsilon \right) \zeta(3) \cdot \frac{1}{3!} \left( \frac{35}{16} + \epsilon \right) \zeta(5)}{\frac{1}{4!} \left( \frac{175}{144} - \epsilon \right)} \\
            &= 9 \zeta(3) \zeta(5) + O(\epsilon)
        \end{align*}
        In this case, the \( 1\,563\textsuperscript{rd}\) superabundant number  
        \begin{align*}
        S_{1\,563} = {} 113\,853\,&868\,558\,315\,232\,571\,591\,404\,632\,806\,683\,080\,574\,547\,883\,715\,331\,\\
        &178\,785\,881\,099\,068\,163\,763\,133\,283\,612\,888\,011\,818\,919\,178\,613\,\\
        &625\,383\,130\,513\,557\,281\,140\,872\,437\,873\,408\,921\,476\,586\,393\,412\,\\
        &510\,334\,913\,628\,647\,234\,606\,393\,265\,427\,230\,012\,067\,709\,453\,218\,\\
        &606\,140\,328\,997\,865\,763\,463\,753\,286\,967\,791\,195\,387\,697\,920\,000 \,,
        \end{align*}
        has
        \[
            \sigma_{-1}(S_{1\,563}) = 11.21813\ldots > 9 \zeta(3)\zeta(5) = 11.21801\ldots \,,
        \]
        giving \( n = S_{1\,563} \) as a rough benchmark for the location of an explicit counterexample.
        \item Robin \cite{Robin} showed that the Riemann hypothesis is equivalent to
        \[
        \sigma_{-1}(n) < e^\gamma \log(\log(n))
        \]
        for all $n > 5040$ where $\gamma$ is the Euler--Mascheroni constant. Hence under the Riemann hypothesis, the inequality in Eqn. \eqref{eq:BoundForSigma-1ForNotHurwitz} is false for all $n$ less than $10^{10^{52}}$ which exceeds the benchmark above by several orders of magnitude and underlines the sheer size of our error terms.
    \end{enumerate}
\end{rmks}

\section{Open challenges}

Figure \ref{fig:ComplexRoots} suggests certain structures within the distribution of the complex roots of $R_n^\sigma(X)$. Computing the actual roots of all $R_n^\sigma(X)$ up to $n = 10 ^ { 2.8632953142746\ldots \times 10 ^ {95}}$ will be too time-consuming, even if dismissing the substantial obstruction given by our limited storage compared to the number of roots of each polynomial. The indicated trajectories, running from orange through green and blue to purple in colour, appear to be asymptotic to various families of curves. We see about five families already with roots up to $n = 1\, 000\,$, each with varying slopes. It would be interesting to see whether these take a turn to the right and at some point cross the imaginary axis. In particular, this would be the case if the slopes are related to the expected expression $\sigma_{-1}(n) - 9\zeta(3)\zeta(5)$, as we remarked above.

Similar to the above question, we wonder if there is any possible description of a root with positive real part. By our result, it is not even clear whether there are infinitely many pairwise distinct roots in the positive half-plane. An interesting question is also whether any compact subset of the complex plane of positive measure eventually contains a root of $P_n^\sigma(X)$ for some $n$.

We are also interested in the cases for which the D'Arcais polynomials are in fact Hurwitz. Is $R_p^\sigma(X)$ Hurwitz for all primes $p$?

Recently, the second and third author \cite{HeimNeuDominant} showed that the root of $P_n^\sigma(X)$ with largest absolute value is real and simple. Numerical evidence suggests that all zeros are simple but this remains an open question.

\bibliographystyle{habbrv2}
\bibliography{bibliography.bib}

@article{Routh,
    author = {Edward John Routh},
    title = {A Treatise on the Stability of a Given State of Motion},
    journal = {Macmillan},
    year = {1877}
}

@misc{HeimNeuDominant,
    author = {Heim, B. and Neuhauser, M.},
    title = {Dominant Zeros of {N}ekrasov--{O}kounkov Polynomials},
    year = {2026},
    eprint = {2606.15394},
    archivePrefix = {arXiv},
    comment = {with Appendix by K. Ono}
}

@article{Robin,
    author =  {Robin, G.},
    title = {Grandes valeurs de la fonction somme des diviseurs et hypothèse de Riemann},
    journal = {Journal de Mathématiques Pures et Appliquées},
    year = {1984},
    volume =  {63},
    pages = {187--213}
}

@article{EganLineweaver,
    author = {Egan, C. A. and Lineweaver, C. H.},
    title = {A LARGER ESTIMATE OF THE ENTROPY OF THE UNIVERSE},
    journal = {The Astrophysical Journal},
    volume = {710},
    number = {2},
    year = {2010},
    pages = {1825--1834},
    doi = {10.1088/0004-637X/710/2/1825}
    }

@article{PlanckCollab,
    author = {{The Planck Collaboration}},
    title = {Planck 2018 results VI},
    subtitle = {Cosmological parameters},
    journal = {Astronomy and Astrophysics},
    year = {2020},
    volume = {641},
    number = {A6},
    doi = {10.1051/0004-6361/201833910}
}

@article{Hurwitz,
    author = {Hurwitz, A.},
    title = {Ueber die {B}edingungen, unter welchen eine {G}leichung nur {W}urzeln mit negativen reellen {T}heilen besitzt},
    journal = {Mathematische Annalen},
    year = {1895},
    doi = {10.1007/BF01446812},
    issue = {46},
    pages = {273--284}
}

@misc{stumpenhusen2026logconcavitydarcaispolynomialsnormalised,
      title={On the Log-Concavity of the D'Arcais Polynomials for Normalised Functions}, 
      author={Johann Stumpenhusen},
      year={2026},
      eprint={2607.14961},
      archivePrefix={arXiv},
      primaryClass={math.NT}
}

@article{Serre,
    author = {Serre, J.-P.},
    title = {Sur la lacunarité des puissances de $\eta$},
    journal = {Glasgow Mathematical Journal},
    year = {1985},
    issue = {27},
    pages = {203-221},
    doi = {10.1017/S0017089500006194}
}

@misc{CharltonHeimStum,
      title={On the Smallest Counterexample to the Log-Concavity of the D'Arcais Polynomials}, 
      author={Steven Charlton and Bernhard Heim and Johann Stumpenhusen},
      year={2026},
      eprint={2606.09545},
      archivePrefix={arXiv},
      primaryClass={math.NT}
}

@misc{Starr,
    author =  {Shannon Starr},
    title = {Asymptotics of the {D}'{A}rcais Numbers at Small $k$},
    year = {2026},
    eprinttype = {arXiv},
    eprint = {2601.18599}
}

@article {HeimNeu20,
    AUTHOR = {Heim, Bernhard and Neuhauser, Markus},
     TITLE = {Horizontal and vertical log-concavity},
   JOURNAL = {Res. Number Theory},
  FJOURNAL = {Research in Number Theory},
    VOLUME = {7},
      YEAR = {2021},
    NUMBER = {1},
     PAGES = {Paper No. 18, 12},
      ISSN = {2522-0160,2363-9555},
   MRCLASS = {11B83 (05A10 11B37)},
  MRNUMBER = {4220048},
MRREVIEWER = {Eric\ S.\ Egge},
       DOI = {10.1007/s40993-021-00245-1},
       URL = {https://doi.org/10.1007/s40993-021-00245-1},
       eprinttype = {arxiv},
       eprint = {2010.05231}
}

@article {HeimStum26,
    AUTHOR = {Heim, Bernhard and Stumpenhusen, Johann},
     TITLE = {On the detection of non-roots of {D}'{A}rcais polynomials},
   JOURNAL = {Res. Number Theory},
  FJOURNAL = {Research in Number Theory},
    VOLUME = {12},
      YEAR = {2026},
    NUMBER = {2},
     PAGES = {Paper No. 42, 11},
      ISSN = {2522-0160,2363-9555},
   MRCLASS = {05A17 (05A20 11R04 11R09)},
  MRNUMBER = {5058125},
       DOI = {10.1007/s40993-026-00705-6},
       URL = {https://doi.org/10.1007/s40993-026-00705-6},
       eprinttype = {arXiv},
       eprint = {2511.16276}
}

@article {HeimNeu25,
    AUTHOR = {Heim, Bernhard and Neuhauser, Markus},
     TITLE = {On the non-vanishing of the {D}'{A}rcais polynomials},
   JOURNAL = {Ramanujan J.},
  FJOURNAL = {Ramanujan Journal. An International Journal Devoted to the
              Areas of Mathematics Influenced by Ramanujan},
    VOLUME = {69},
      YEAR = {2026},
    NUMBER = {1},
     PAGES = {Paper No. 7, 11},
      ISSN = {1382-4090,1572-9303},
   MRCLASS = {11F30 (05A17 05A20 11P82 11R04)},
  MRNUMBER = {5004687},
       DOI = {10.1007/s11139-025-01286-1},
       URL = {https://doi.org/10.1007/s11139-025-01286-1},
       eprinttype = {arXiv},
       eprint = {2509.06123}
}

@article {AbPruDoVe,
    AUTHOR = {Abdesselam, Abdelmalek and Brunialti, Pedro and Doan, Tristan
              and Velie, Philip},
     TITLE = {A bijection for tuples of commuting permutations and a
              log-concavity conjecture},
   JOURNAL = {Res. Number Theory},
  FJOURNAL = {Research in Number Theory},
    VOLUME = {10},
      YEAR = {2024},
    NUMBER = {2},
     PAGES = {Paper No. 45, 10},
      ISSN = {2522-0160,2363-9555},
   MRCLASS = {05A20 (05A05 20B30)},
  MRNUMBER = {4734058},
MRREVIEWER = {Niklas\ Eriksen},
       DOI = {10.1007/s40993-024-00531-8},
       URL = {https://doi.org/10.1007/s40993-024-00531-8},
       eprinttype = {arXiv},
       eprint = {2309.09407}
}

@article {Abdessel23,
    AUTHOR = {Abdesselam, Abdelmalek},
     TITLE = {Log-concavity with respect to the number of orbits for
              infinite tuples of commuting permutations},
   JOURNAL = {Ann. Comb.},
  FJOURNAL = {Annals of Combinatorics},
    VOLUME = {29},
      YEAR = {2025},
    NUMBER = {2},
     PAGES = {563--573},
      ISSN = {0218-0006,0219-3094},
   MRCLASS = {05A20 (05A05)},
  MRNUMBER = {4913024},
MRREVIEWER = {Niklas\ Eriksen},
       DOI = {10.1007/s00026-024-00724-z},
       URL = {https://doi.org/10.1007/s00026-024-00724-z},
       eprinttype = {arXiv},
       eprint = {2309.07358}
}

@article {Leh47,
    AUTHOR = {Lehmer, D. H.},
     TITLE = {The vanishing of {R}amanujan's function {$\tau(n)$}},
   JOURNAL = {Duke Math. J.},
  FJOURNAL = {Duke Mathematical Journal},
    VOLUME = {14},
      YEAR = {1947},
     PAGES = {429--433},
      ISSN = {0012-7094,1547-7398},
   MRCLASS = {10.0X},
  MRNUMBER = {21027},
MRREVIEWER = {R.\ A.\ Rankin},
       URL = {http://projecteuclid.org/euclid.dmj/1077474140},
}

@article{DAr13,
  title={D{\'e}veloppement en s{\'e}rie},
  author={D’Arcais, F},
  journal={Interm{\'e}diaire Math},
  volume={20},
  pages={233--234},
  year={1913}
}

@incollection {NO06,
    AUTHOR = {Nekrasov, Nikita A. and Okounkov, Andrei},
     TITLE = {Seiberg-{W}itten theory and random partitions},
 BOOKTITLE = {The unity of mathematics},
    SERIES = {Progr. Math.},
    VOLUME = {244},
     PAGES = {525--596},
 PUBLISHER = {Birkh\"auser Boston, Boston, MA},
      YEAR = {2006},
      ISBN = {978-0-8176-4076-7; 0-8176-4076-2},
   MRCLASS = {81T60 (05E10 11Z05 14D21 60C05 81T45)},
  MRNUMBER = {2181816},
MRREVIEWER = {Johan\ A.\ Martens},
       DOI = {10.1007/0-8176-4467-9\_15},
       URL = {https://doi.org/10.1007/0-8176-4467-9_15},
       eprinttype = {arXiv},
       eprint = {hep-th/0306238}
}

@article {Zh22,
    AUTHOR = {Zhang, Shengtong},
     TITLE = {Log-concavity in powers of infinite series close to
              {$(1-z)^{-1}$}},
   JOURNAL = {Res. Number Theory},
  FJOURNAL = {Research in Number Theory},
    VOLUME = {8},
      YEAR = {2022},
    NUMBER = {4},
     PAGES = {Paper No. 66, 17},
      ISSN = {2522-0160,2363-9555},
   MRCLASS = {05A15 (11P82)},
  MRNUMBER = {4483568},
MRREVIEWER = {Eric\ S.\ Egge},
       DOI = {10.1007/s40993-022-00370-5},
       URL = {https://doi.org/10.1007/s40993-022-00370-5},
       eprinttype = {arXiv},
       eprint = {2203.12008}
}

@article {Ha10,
    AUTHOR = {Han, Guo-Niu},
     TITLE = {The {N}ekrasov-{O}kounkov hook length formula: refinement,
              elementary proof, extension and applications},
   JOURNAL = {Ann. Inst. Fourier (Grenoble)},
  FJOURNAL = {Universit\'e{} de Grenoble. Annales de l'Institut Fourier},
    VOLUME = {60},
      YEAR = {2010},
    NUMBER = {1},
     PAGES = {1--29},
      ISSN = {0373-0956,1777-5310},
   MRCLASS = {05A17 (05A15 11P82)},
  MRNUMBER = {2664308},
MRREVIEWER = {Christine\ Bessenrodt},
       DOI = {10.5802/aif.2515},
       URL = {https://doi.org/10.5802/aif.2515},
       eprinttype = {arXiv},
       eprint = {0805.1398}
}

@article {HeimNeu19Hooklength,
    AUTHOR = {Heim, Bernhard and Neuhauser, Markus},
     TITLE = {On conjectures regarding the {N}ekrasov-{O}kounkov hook length
              formula},
   JOURNAL = {Arch. Math. (Basel)},
  FJOURNAL = {Archiv der Mathematik},
    VOLUME = {113},
      YEAR = {2019},
    NUMBER = {4},
     PAGES = {355--366},
      ISSN = {0003-889X,1420-8938},
   MRCLASS = {05A17 (05A19 11P82)},
  MRNUMBER = {4008070},
MRREVIEWER = {Mircea\ Merca},
       DOI = {10.1007/s00013-019-01335-4},
       URL = {https://doi.org/10.1007/s00013-019-01335-4},
       eprinttype = {arXiv},
       eprint = {1810.02226},
}

@article {Adm2018,
    AUTHOR = {Adm, Mohammad and Garloff, J\"urgen and Tyaglov, Mikhail},
     TITLE = {Total nonnegativity of finite {H}urwitz matrices and root
              location of polynomials},
   JOURNAL = {J. Math. Anal. Appl.},
  FJOURNAL = {Journal of Mathematical Analysis and Applications},
    VOLUME = {467},
      YEAR = {2018},
    NUMBER = {1},
     PAGES = {148--170},
      ISSN = {0022-247X,1096-0813},
   MRCLASS = {15B48},
  MRNUMBER = {3834798},
MRREVIEWER = {Dominique\ Guillot},
       DOI = {10.1016/j.jmaa.2018.06.065},
       eprint = {1711.04651},
       eprinttype = {arXiv}
}

\vspace{-1.5em}

\end{document}